\begin{document}

\let\oldphi=\phi
\let\phi=\varphi
\let\oldtheta=\theta
\let\theta=\vartheta

%
\def\cL{{\mathcal L}}
\def\cP{{\mathcal P}}
\def\cH{{\mathcal H}}
\def\cF{{\mathcal F}}
\def\cD{{\mathcal D}}
\def\cS{{\mathcal S}}

\def\ch{\mathop{\rm char}\nolimits}
\def\diag{\mathop{\rm diag}\nolimits}
\def\diam{\mathop{\rm diam}\nolimits}
\def\rank{\mathop{\rm rank}\nolimits}
\def\spn{\mathop{\rm span}\nolimits}
\def\GL{\mathop{\rm GL}\nolimits}
\def\id{\mathop{\rm id}\nolimits}

\def\inv{\,\overline{\phantom{I}}\,}
\newcommand\invstrich{\inv{}^{{}'}}

\newcommand{\MM}{M_{m\times n}(\cD)}
\newcommand{\HH}{\cH_n(\cD)}

\theoremstyle{plain}{
  \newtheorem{theorem}{Theorem}[section]
  \newtheorem{assumption}[theorem]{Assumption}
  \newtheorem{definition}[theorem]{Definition}
  \newtheorem{lemma}[theorem]{Lemma}
  \newtheorem{proposition}[theorem]{Proposition}
  \newtheorem{corollary}[theorem]{Corollary}
  \newtheorem{Preliminaries}[theorem]{Preliminaries}}
\theoremstyle{definition}{
  \newtheorem{example}[theorem]{Example}
  \newtheorem{remark}[theorem]{Remark}}

\renewcommand\labelenumi{\rm\arabic{enumi}.}

\title{Diameter preserving surjections in the geometry of matrices}

\author{Wen-ling Huang\thanks{Lise Meitner Research Fellow
of the Austrian Science Fund (FWF), project M 1023.} \and Hans
Havlicek\thanks{Corresponding author}}


\date{}

\maketitle

\begin{abstract}
We consider a class of graphs subject to certain restrictions, including the
finiteness of diameters. Any surjective mapping $\phi:\Gamma\to\Gamma'$ between
graphs from this class is shown to be an isomorphism provided that the
following holds: Any two points of $\Gamma$ are at a distance equal to the
diameter of $\Gamma$ if, and only if, their images are at a distance equal to
the diameter of $\Gamma'$. This result is then applied to the graphs arising
from the adjacency relations of spaces of rectangular matrices, spaces of
Hermitian matrices, and Grassmann spaces (projective spaces of rectangular
matrices).
\end{abstract}

\noindent\emph{Keywords.} Adjacency preserving mapping, diameter preserving
mapping, geometry of matrices, Grassmann space.

\noindent\emph{MSC:} 51A50, 15A57.

\section{Introduction}\label{se:intro}

Related to his study of analytic functions of several complex variables,
L.~K.~Hua initiated the geometries of rectangular, symmetric, Hermitian, and
alternate matrices in the middle forties of the last century. The elements of
such a matrix space are also called \emph{points}, and there is a symmetric and
anti-reflexive \emph{adjacency\/} relation on the point set. The adjacency
relation turns the point set of a matrix space into the set of vertices of a
graph. The problem to describe all isomorphisms between such graphs has
attracted many authors. In other words, one aims at describing all bijections
between matrix spaces such that adjacency (graph-theoretic distance $1$) is
preserved in both directions. Solutions to this problem are usually stated as a
\emph{fundamental theorem\/} for a geometry of matrices. See the book of
Z.-X.~Wan \cite{Wan1996a} for a wealth of results and references.
\par
All graphs, which stem from the matrix spaces mentioned above, have finite
diameter. Several recent papers are concerned with a description of all
bijections between matrix spaces which are \emph{diameter preserving\/} in both
directions. The proofs pursue the same pattern: In a first step, a bijection of
this kind is shown to preserve adjacency in both directions. Then, in a second
step, the appropriate fundamental theorem is applied to accomplish the task.
See \cite{HavlicekSemrl2006} and \cite{Kobal2007}. Similar results about
Grassmann spaces and other structures can be found in
\cite{AbramenkoVanMaldeghem2000}, \cite{BlunckHavlicek2005b},
\cite{BlunckHavlicek2005}, and \cite{HavlicekPankov2005}.
\par
In the present paper we aim at shedding light on this issue by a different
approach. It follows the ideas from \cite{Huang2005a}, where adjacency
preserving mappings were exhibited for a wide class of point-line geometries
rather than those of a specific kind. So, we consider a \emph{class of
graphs\/} subject to five conditions (A1)--(A5), one of them ensuring
finiteness of diameters. Theorem~\ref{thm:hauptsatz} contains our main result:
A surjective mapping $\phi$ between graphs $\Gamma$ and $\Gamma'$ from this
class is an isomorphism provided that any two points of $\Gamma$ are at a
distance equal to the diameter of $\Gamma$ if, and only if, their images are at
a distance equal to the diameter of $\Gamma'$. The backbone of the proof is
contained in Lemma~\ref{lem:A1-4}, which is about graphs satisfying (A1)--(A4).
It contains a sufficient condition for two points of such a graph to be
adjacent. This condition is in terms of the diameter alone (cf.\ formula
\eqref{eq:es_gibt_p}), and it appears also in the articles mentioned before.
The remaining condition (A5) just assures that any two adjacent points admit a
description as in this lemma.
\par
In this way we set up a very general framework which can then be applied to
several geometries of matrices. We verify conditions (A1)--(A5) for the
geometry of rectangular matrices over an division ring with more than two
elements, the geometry of Hermitian matrices over a division ring with
involution satisfying some extra conditions, and the projective geometry of
rectangular matrices over an arbitrary division ring. Consequently,
Theorem~\ref{thm:hauptsatz} is applicable to all these geometries. This
improves results from \cite{BlunckHavlicek2005b}, \cite{HavlicekSemrl2006}, and
\cite{Kobal2007} by removing unnecessary assumptions. At the end of
Subsection~\ref{subse:Hermite} we present several examples, for which some of
the conditions (A1)--(A5) are violated. In particular, it is shown that a
diameter preserving surjection need not be an isomorphism for spaces of
symmetric $n\times n$ matrices, $n$ even, over a field of characteristic $2$.

\par We are convinced that there are many more geometries, which allow an
interpretation as a graph with properties (A1)--(A5). Thus, our main result
should also find other applications in the future.
\par
On the other hand, a condition in the spirit of our Lemma~\ref{lem:A1-4} was
also used in situations which are beyond our approach. See
\cite{BlunckHavlicek2005b}, where the points of a graph are defined to be
certain subspaces of a vector space with infinite dimension, and
\cite{HavlicekSemrl2006}, where all bounded linear operators of a complex
Hilbert space with infinite dimension are considered as points of a graph. Any
of the graphs arising in one of these ways has infinite diameter. Nevertheless
it is possible to characterise its adjacency relation in terms of another,
extrinsically given, binary relation. This relation is the complementarity of
two subspaces in \cite{BlunckHavlicek2005b} and the invertibility of the
difference of two operators in \cite{HavlicekSemrl2006}.

\section{The main result}\label{se:hauptsatz}

Let $\Gamma$ be a (finite or infinite) graph. Note that all our graphs are
undirected, without loops and with at least one vertex. The set of vertices of
$\Gamma$ will be denoted by $\cP$. In a more geometric language, vertices will
also be called \emph{points}. As usual, we say that $x,y\in\cP$ are
\emph{adjacent\/} if $\{x,y\}$ is an edge. The \emph{distance\/} of two points
$x,y\in\cP$ is written as $d(x,y)$. Thus $x,y$ are adjacent precisely when
$d(x,y)=1$.
\par
From now on, we focus our attention on graphs $\Gamma$ satisfying the following
conditions:
\begin{enumerate}
\item[(A1)]

$\Gamma$ is connected and its diameter $\diam \Gamma$ is finite.

\item[(A2)]

For any points $x,y\in\cP$ there is a point $z\in\cP$ with
\begin{equation*}
    d(x,z)=d(x,y)+d(y,z)=\diam \Gamma.
\end{equation*}

\item[(A3)]

For any points $x,y,z\in\cP$ with $d(x,z)=d(y,z)=1$ and $d(x,y)=2$ there is a
point $w$ satisfying
\begin{equation*}
    d(x,w)=d(y,w)=1 \mbox{~~and~~} d(z,w)=2.
\end{equation*}

\item[(A4)]
For any points $x,y,z\in\cP$ with $x\neq y$ and $d(x,z)=d(y,z)=\diam \Gamma$
there is a point $w$ with
\begin{equation*}
  d(z,w) = 1,\;\;
  d(x,w) = \diam \Gamma-1, \mbox{~~and~~}
  d(y,w) = \diam \Gamma.
\end{equation*}
\item[(A5)]

For any adjacent points $a,b\in\cP$ there exists a point
$p\in\cP\setminus\{a,b\}$ such that for all $x\in\cP$ the following holds:
\begin{equation*}
    d(x,p)=\diam \Gamma\;\;\Rightarrow\;\;d(x,a)=\diam \Gamma\;\vee\;d(x,b)=\diam \Gamma.
\end{equation*}

\end{enumerate}
Let us shortly comment on these conditions: (A1) is merely a technical
assumption which is needed for all that follows. The subsequent conditions are
about geodesics of $\Gamma$: (A2) says that any geodesic can be extended at
each of its endpoints to a geodesic with length $\diam \Gamma$, which is the
maximal length any geodesic might have. Condition (A3) ensures that for any two
points $x,y$ at distance $2$ there are geodesics $(x,z,y)$ and $(x,w,y)$ with
$d(z,w)=2$. It appears also in \cite{BrouwerWilbrink1983} and
\cite{Cameron1982}. Similarly, (A4) guarantees for distinct points $x,y$ the
existence of a geodesic $(x,\ldots,w,z)$ subject to the specified property of
the penultimate point $w$. Finally, we have our crucial condition (A5): It
states for any two adjacent points the existence of a third point with certain
properties.
\par
We refer to Section~\ref{se:appl} for infinite series of graphs which satisfy
(A1)--(A5). Graphs which satisfy (A1)--(A3), but only one of (A4) and (A5) are
presented in Example~\ref{ex:A4} and Example~\ref{ex:A5}.
\par
Our first result contains a sufficient condition for two points to be adjacent.
Observe that we do not assume condition (A5) here.

\begin{lemma}\label{lem:A1-4}
Given a graph $\Gamma$ which satisfies conditions\/ \emph{(A1)--(A4)} let
$n:=\diam \Gamma$. Suppose that $a,b\in\cP$ are distinct points with the
following property:
\begin{equation}\label{eq:es_gibt_p}
    \exists\, p\in\cP\setminus\{a,b\}\;\forall\, x\in\cP \;:\;
    d(x,p)=n  \;\;\Rightarrow\;\;  d(x,a)=n\;\vee\;d(x,b)=n.
\end{equation}
Then $a$ and $b$ are adjacent.
\end{lemma}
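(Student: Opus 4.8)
The plan is to argue by contradiction: assume $a,b$ are distinct, satisfy \eqref{eq:es_gibt_p} via some point $p\notin\{a,b\}$, but are \emph{not} adjacent, so $d(a,b)=:k\geq 2$. The strategy is to produce a point $x$ that is at distance $n$ from $p$ but at distance strictly less than $n$ from \emph{both} $a$ and $b$, contradicting the implication in \eqref{eq:es_gibt_p}. The intuition is that $a$ and $b$ are "close together" (distance $\leq k$, or even $=2$ after a reduction), so a point far from one should be nearly as far from the other; the point $p$, by contrast, can be pushed to be genuinely far from both.

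First I would try to reduce to the case $d(a,b)=2$. If $k\geq 3$, pick a geodesic from $a$ to $b$ and let $b'$ be the point on it at distance $2$ from $a$; one checks that the pair $(a,b')$ still satisfies \eqref{eq:es_gibt_p} with the same $p$ (since $d(x,a)=n$ or $d(x,b')=n$ is implied once we know $d(x,a)=n$ or $d(x,b)=n$ together with $d(b',b)=k-2$ and the triangle inequality — this needs a small argument using that $n$ is the diameter, so $d(x,b)=n$ forces $d(x,b')\geq n-(k-2)$, which is not immediately $n$; so the reduction may instead need to go the other way, replacing $b$ by a neighbour of $a$). I expect this reduction step to be the first technical hurdle, and it may be cleaner to handle general $k$ directly rather than reduce.

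Next, the heart of the argument. Suppose $d(a,b)=2$ and let $z$ be a common neighbour of $a$ and $b$, so $(a,z,b)$ is a geodesic. By (A3) there is a point $w$ with $d(a,w)=d(b,w)=1$ and $d(z,w)=2$. Now apply (A2): extend to get a point $x$ with $d(w,x)=d(w,p')+d(p',x)=n$ for a suitable intermediate choice, or more directly use (A2)/(A4) to find a point $x$ at distance $n$ from $p$. The goal is to arrange that $x$ lies "on the $w$ side," forcing $d(x,a)\leq n-1$ and $d(x,b)\leq n-1$: since $a$ and $b$ are both neighbours of $w$, any point at distance $\leq n-1$ from $w$ is at distance $\leq n$ from $a$ and $b$, and we want to squeeze out strict inequality. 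Condition (A4), applied with the roles of the penultimate point, is what lets us control the last step of a geodesic of length $n$ and thereby separate the distances to $a$ versus $b$. The main obstacle, I expect, is the careful bookkeeping of distances: one must combine (A2) (to reach distance $n$ from $p$) with (A3) (to get the "spread" point $w$ equidistant from $a,b$ but far from $z$) and (A4) (to fine-tune a geodesic's endpoint) so that the resulting $x$ simultaneously has $d(x,p)=n$ and $d(x,a),d(x,b)<n$. Getting all three inequalities to hold at once — rather than two out of three — is the crux, and is presumably where conditions (A3) and (A4) are each used exactly once in an essential way.

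Finally, once such an $x$ is exhibited, \eqref{eq:es_gibt_p} gives $d(x,a)=n$ or $d(x,b)=n$, contradicting $d(x,a),d(x,b)<n$. Hence $d(a,b)=1$, i.e.\ $a$ and $b$ are adjacent. I would double-check the edge cases $n=1$ (then every pair of distinct points is adjacent, trivial) and $n=2$ separately, since the geodesic manipulations degenerate there.
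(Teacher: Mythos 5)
Your plan has the right overall shape (contradiction via a witness $x$ with $d(x,p)=n$ but $d(x,a),d(x,b)<n$, with (A3) supplying a second common neighbour $w$ of $a$ and $b$), but it is missing the key first step that makes everything else work: you must first \emph{locate $p$}, namely prove $d(a,p)=d(b,p)=1$. The paper does this by applying (A2) to the pair $(a,p)$ to get $x$ with $d(x,p)=n$ and $d(x,a)=n-d(a,p)<n$, so that \eqref{eq:es_gibt_p} forces $d(x,b)=n$; then (A4) (applied to the two points $b,p$ at distance $n$ from $x$) produces a neighbour $y$ of $x$ with $d(y,b)=n-1$ and $d(y,p)=n$, whence \eqref{eq:es_gibt_p} forces $d(y,a)=n$ and the triangle inequality gives $d(a,p)\le 1$. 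Note that the hypothesis \eqref{eq:es_gibt_p} is used twice here, not only at the final contradiction as in your plan. Once $d(a,p)=d(b,p)=1$ is known, your troublesome reduction to $d(a,b)=2$ is automatic ($d(a,b)\le d(a,p)+d(p,b)=2$), and --- crucially --- (A3) can be applied with $z=p$ itself, yielding $w$ adjacent to $a$ and $b$ with $d(p,w)=2$.

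Without that step your construction cannot close. You take $z$ to be an arbitrary common neighbour of $a$ and $b$ and obtain from (A3) a point $w$ with $d(z,w)=2$; but this says nothing about $d(p,w)$, and the contradiction requires a common neighbour $w$ of $a,b$ with $d(p,w)\ge 2$: only then does (A2) give $x$ with $d(x,p)=n$ and $d(x,w)=n-d(p,w)\le n-2$, hence $d(x,a),d(x,b)\le n-1$. Your own admission that ``getting all three inequalities to hold at once is the crux'' is exactly the symptom of this gap: the three inequalities cannot be arranged simultaneously as long as the position of $p$ relative to $a$, $b$, $w$ is unknown. Example~\ref{ex:A4} in the paper shows the conclusion genuinely fails when (A4) is dropped, so an argument that does not exploit (A4) together with \eqref{eq:es_gibt_p} to pin down $p$ cannot succeed.
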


\begin{proof}
Let $k:=d(a,p)$. First we show $k=1$. By condition (A2), there is a point $x\in
\cP$ with
\begin{equation*}
    n=d(x,p)=d(x,a)+d(a,p).
\end{equation*}
Thus $d(x,a)=n-k<n$. We read off from \eqref{eq:es_gibt_p} that $d(x,b)=n$. Now
condition (A4) implies the existence of a point $y\in \cP$ with
\begin{equation*}
    d(x,y)=1,\quad d(y,b)=n-1,\quad d(y,p)=n.
\end{equation*}
So \eqref{eq:es_gibt_p} yields $d(y,a)=n$. Finally,
\begin{equation*}
    n=d(y,a)\le d(y,x)+d(x,a)=1+n-k
\end{equation*}
implies $k=1$, as required. Since property \eqref{eq:es_gibt_p} is symmetric in
$a$ and $b$, we also have $d(b,p)=1$.
\par
Now we prove $d(a,b)=1$. Suppose to the contrary $d(a,b)\neq 1$. From
$d(a,b)\le d(a,p)+d(p,b)=2$, we obtain $d(a,b)=2$. Condition (A3) yields the
existence of a point $w\in\cP$ with
\begin{equation*}
    d(a,w)=d(b,w)=1 \mbox{~~and~~} d(p,w)=2.
\end{equation*}
By (A2), there is a point $z\in \cP$ with
\begin{equation*}
    n=d(z,p)=d(z,w)+d(w,p).
\end{equation*}
Therefore $d(z,w)=n-2$. Furthermore, $d(a,z)\le d(a,w)+d(w,z)=n-1$ and
$d(b,z)\le d(b,w)+d(w,z)=n-1$, a contradiction to property
\eqref{eq:es_gibt_p}.
\end{proof}

We are now in a position to prove our main theorem.

\begin{theorem}\label{thm:hauptsatz}
Let $\Gamma$ and $\Gamma'$ be two graphs satisfying the above conditions\/
\emph{(A1)--(A5)}. If $\phi:\cP\to\cP'$ is a surjection which satisfies
\begin{equation}\label{eq:dm-treu}
    d(x,y)=\diam \Gamma \;\;\Leftrightarrow\;\;
    d(x^{\phi},y^{\phi})=\diam \Gamma'\mbox{~~for all~~} x,y\in\cP,
\end{equation}
then $\phi$ is an isomorphism of graphs. Consequently, $\diam \Gamma=\diam
\Gamma'$.
\end{theorem}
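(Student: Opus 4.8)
The plan is to use the surjectivity of $\phi$ together with condition (A5) and Lemma~\ref{lem:A1-4} to show that $\phi$ preserves adjacency in both directions; once that is done, $\phi$ is a graph isomorphism (injectivity coming for free, since distinct points of a connected graph have a well-defined finite distance, and adjacency-preservation in both directions forces $\phi$ to separate points — indeed two points with the same image would both be adjacent and non-adjacent to anything adjacent to one of them). First I would record the easy half: if $d(x,y)=\diam\Gamma$ then by \eqref{eq:dm-treu} we have $d(x^\phi,y^\phi)=\diam\Gamma'$, so $\phi$ carries diameter-pairs to diameter-pairs and, by surjectivity, every diameter-pair of $\Gamma'$ is hit; running the argument with $\phi^{-1}$-preimages is not immediately available since $\phi$ is not yet known to be injective, so I would instead argue directly in terms of \eqref{eq:dm-treu} throughout, which is a genuine two-sided biconditional.

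The heart of the matter is to show: $d(a,b)=1$ in $\Gamma$ $\iff$ $d(a^\phi,b^\phi)=1$ in $\Gamma'$. For the forward direction, take adjacent $a,b\in\cP$. By (A5) applied in $\Gamma$, there is a point $p\in\cP\setminus\{a,b\}$ such that for all $x\in\cP$, $d(x,p)=\diam\Gamma$ implies $d(x,a)=\diam\Gamma$ or $d(x,b)=\diam\Gamma$. I claim $p^\phi\notin\{a^\phi,b^\phi\}$ — this needs checking, but follows because $p$ is not adjacent to (and distinct from) suitable witnesses; more robustly, since $a\neq b$, pick by (A2) a point at distance $\diam\Gamma$ from one but not the other and transport via \eqref{eq:dm-treu}, which will also later pin down $a^\phi\neq b^\phi$. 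Now I want to verify that $p^\phi$ witnesses \eqref{eq:es_gibt_p} for the pair $a^\phi,b^\phi$ in $\Gamma'$: let $x'\in\cP'$ with $d(x',p^\phi)=\diam\Gamma'$; by surjectivity write $x'=x^\phi$; then \eqref{eq:dm-treu} gives $d(x,p)=\diam\Gamma$, hence $d(x,a)=\diam\Gamma$ or $d(x,b)=\diam\Gamma$, hence by \eqref{eq:dm-treu} again $d(x^\phi,a^\phi)=\diam\Gamma'$ or $d(x^\phi,b^\phi)=\diam\Gamma'$. So the hypothesis of Lemma~\ref{lem:A1-4} (applied to $\Gamma'$, which satisfies (A1)--(A4)) holds for $a^\phi,b^\phi$, and therefore $d(a^\phi,b^\phi)=1$. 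The reverse direction is completely symmetric: if $d(a^\phi,b^\phi)=1$ in $\Gamma'$, apply (A5) in $\Gamma'$ to get a witness $p'\in\cP'$, lift it to $p\in\cP$ by surjectivity, and check via \eqref{eq:dm-treu} that $p$ witnesses \eqref{eq:es_gibt_p} for $a,b$ in $\Gamma$; Lemma~\ref{lem:A1-4} applied to $\Gamma$ then gives $d(a,b)=1$.

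Having established that $\phi$ preserves adjacency in both directions, it follows that $\phi$ is injective: if $a\neq b$ then, by (A2) and \eqref{eq:dm-treu}, $a^\phi$ and $b^\phi$ have distance $\diam\Gamma'\ge 1$, so $a^\phi\neq b^\phi$. Thus $\phi$ is a bijection preserving adjacency both ways, i.e.\ a graph isomorphism; consequently $\phi$ preserves all graph-theoretic distances, and in particular $\diam\Gamma=\diam\Gamma'$. The main obstacle I anticipate is the bookkeeping around the witness point: ensuring $p^\phi\notin\{a^\phi,b^\phi\}$ (respectively $p\notin\{a,b\}$ after lifting), since a naive lift of a witness $p'$ in $\Gamma'$ need not avoid $a,b$ — one may have to choose the preimage carefully, or first establish injectivity of $\phi$ on the relevant points using (A2)+\eqref{eq:dm-treu} before invoking Lemma~\ref{lem:A1-4}. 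Everything else is a routine transport of distance-$\diam$ conditions across the biconditional \eqref{eq:dm-treu} using surjectivity.
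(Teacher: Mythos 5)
Your proposal is correct and follows essentially the same route as the paper: injectivity from (A2) together with \eqref{eq:dm-treu}, and two-sided adjacency preservation by transporting the (A5)-witness through the surjection and \eqref{eq:dm-treu} so that Lemma~\ref{lem:A1-4} applies. The paper simply establishes injectivity \emph{first} (including the degenerate case $\diam\Gamma'=0$), which disposes of exactly the bookkeeping you worry about, namely that the witness $p$ satisfies $p^\phi\notin\{a^\phi,b^\phi\}$ and that $a^\phi\neq b^\phi$.
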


\begin{proof}
We start by showing that $\phi$ is injective. There are two cases as follows.
\par
$\diam \Gamma'=0$: Choose any $x\in\cP$. From $0=d(x^\phi,x^\phi)=\diam
\Gamma'$ follows $0=d(x,x)=\diam \Gamma$. This implies $|\cP|=1$, whence $\phi$
is injective.
\par
$\diam \Gamma'\geq 1$: Let $x, y\in \cP$ be distinct. If $d(x,y)=\diam \Gamma$
then $d(x^\phi,y^\phi)=\diam \Gamma'\geq 1$ so that $x^\phi\neq y^\phi$. Now
suppose that $d(x,y)<\diam \Gamma$. Then, by (A2) and $x\neq y$, there exists a
point $z\in\cP$ for which
\begin{equation*}
    d(x,z) = d(x,y) + d(y,z) =\diam \Gamma \neq d(y,z).
\end{equation*}
Hence $d(x^\phi,z^\phi)=\diam \Gamma'\neq d(y^\phi,z^\phi)$ which shows
$x^\phi\neq y^\phi$.
\par
By the above, we are given a bijection $\phi:\cP\to\cP'$. We infer from
Lemma~\ref{lem:A1-4} and (A5), that $\phi$ preserves adjacency of points in
both directions. Hence it is an isomorphism of graphs.
\end{proof}

\section{Applications}\label{se:appl}

\subsection{Geometry of rectangular matrices}\label{subse:rechteck}
Let $\cD$ be a division ring, $|\cD|\neq 2$, and let $m,n\ge 2$ be integers.
The space of rectangular matrices is based upon set $\MM$ of $m\times n$
matrices with entries in $\cD$. Two matrices $A,B\in \MM$ are defined to be
\emph{adjacent\/} if
\begin{equation*}
    \rank(A-B)=1.
\end{equation*}
Here the term ``rank of a matrix'' is always understood to be the \emph{left
row rank\/}, i.~e., it equals the dimension of the subspace spanned by the row
vectors of the matrix in the left vector space $\cD^n$. It is well known that
the left row rank and the \emph{right column rank\/} coincide for any matrix.
As adjacency is an anti-reflexive and symmetric relation on $\MM$, it can be
viewed as the adjacency relation of a graph with point set $\MM$. It was proved
in \cite[Proposition~3.5]{Wan1996a} that
\begin{equation}\label{eq:d=rank}
    d(A,B) = \rank(A-B) \mbox{~~for all~~}A,B\in \MM.
\end{equation}
\par
We recall that the group $GM_{m\times n}(\cD)$ of transformations
\begin{equation}\label{eq:rechtecktrafo}
    \MM\to \MM : X\mapsto PXQ+R,
\end{equation}
where $P\in\GL_m(\cD)$, $Q\in\GL_n(\cD)$, and $R\in \MM$, is a subgroup of the
automorphism group of the graph on $\MM$.
\par
It was shown in \cite[Corollary~3.10]{Wan1996a} that any two adjacent points
$X,Y\in\MM$ belong to precisely two maximal cliques. Their intersection is
defined to be the \emph{line\/} joining $X$ and $Y$; see
\cite[Corollary~3.13]{Wan1996a}. Moreover, the following holds by
\cite[Lemma~2.2]{HuangWan2004}: Given a point $P\in\MM$ and a line then either
(\emph{i\/}) all points of this line are at the same distance from $P$ or
(\emph{ii\/}) there is an integer $k\geq 1$ such that precisely one point of
this line is at distance $k-1$ from $P$, and all other points of this line are
at distance $k$ from $P$. We shall use this result below.

\begin{lemma}\label{lem:MM_A1-5}
The graph on $\MM$ satisfies conditions\/ \emph{(A1)--(A5)}.
\end{lemma}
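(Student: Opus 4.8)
The plan is to verify each of the five conditions in turn for the graph on $\MM$, using the distance formula \eqref{eq:d=rank}, i.e.\ $d(A,B)=\rank(A-B)$, together with the transitivity of the group $GM_{m\times n}(\cD)$. Since $X\mapsto X-A$ is a graph automorphism, it suffices in most arguments to treat the case where one of the points in question is the zero matrix $0$, and then $d(0,X)=\rank X$. Also note $\diam\Gamma=\min\{m,n\}=:r$, since ranks run from $0$ to $r$ and each rank is attained.

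For (A1): the graph is connected because any matrix of rank $k$ can be written as a sum of $k$ rank-one matrices, giving a path of length $k$ from $0$; and $\diam\Gamma=r<\infty$. For (A2): given $A,B$, after translating we may assume $A=0$, so we must find $Z$ with $\rank Z=r$ and $\rank(Z-B)=r-\rank B$; one takes a rank-$r$ matrix $Z$ that "extends" $B$, e.g.\ after multiplying by invertible matrices so that $B$ is in a normal form with an identity block of size $\rank B$, complete the identity block to full size $r$. For (A3): here $\rank A=\rank B=1$, $\rank(A-B)=2$ (translating so $z=0$); one needs $W$ with $\rank(A-W)=\rank(B-W)=1$ and $\rank W=2$. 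The two rank-one matrices $A,B$ with $\rank(A-B)=2$ lie on no common line, but one can explicitly exhibit a rank-two $W$ with $W-A$ and $W-B$ of rank one — this is where the hypothesis $|\cD|\neq 2$ is used (one needs a scalar $\lambda\notin\{0,1\}$ to build $W$ as a suitable combination). For (A4): translating so $z=0$, we have $\rank A=\rank B=r$ and $A\neq B$, and we need $W$ of rank one with $\rank(A-W)=r-1$ and $\rank(B-W)=r$. We choose $W$ to be a rank-one matrix "lying along" $A$ in the sense that $\rank(A-W)=r-1$ — possible since $\rank A=r$ allows peeling off a rank-one summand — and arrange that $W$ does not similarly reduce $B$; the cited result from \cite{HuangWan2004} about the behaviour of distances along a line is the natural tool to guarantee the generic value $r$ for $d(B,W)$.

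For (A5), given adjacent $A,B$ (so $\rank(A-B)=1$) we must produce $p\notin\{A,B\}$ so that $d(X,p)=r$ forces $d(X,A)=r$ or $d(X,B)=r$. The natural candidate is a third point $p$ on the line through $A$ and $B$: any point $X$ at distance $r$ from $p$ is at distance $\geq r-1$ from $A$ and from $B$ (since $A,B$ are within distance $1$ of $p$), and the line structure result from \cite{HuangWan2004} — case (\emph{ii\/}) with at most one exceptional point per line — combined with the fact that a line has more than two points when $|\cD|\neq 2$ lets one arrange that $X$ cannot be the exceptional (distance $r-1$) point for \emph{both} $A$ and $B$ simultaneously; choosing $p$ to be a generic third point of the line rules out the bad coincidence. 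The main obstacle I expect is exactly this verification of (A5): one must carefully track, for an arbitrary $X$ with $d(X,p)=r$, how the distances $d(X,A)$, $d(X,B)$, $d(X,p)$ interact along the line $A\vee B$, and argue that $d(X,A)=d(X,B)=r-1$ is impossible once $p$ is chosen suitably — this requires pinning down the possible distance patterns of three collinear points to a fixed point $X$, for which the quoted lemma of Huang and Wan is essential, and it may also require the hypothesis $|\cD|\neq 2$ to guarantee enough points on the line. The remaining conditions (A1)--(A4) are comparatively routine normal-form computations.
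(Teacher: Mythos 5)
Your overall strategy (reduce to normal forms via the group action, use the distance formula $d(A,B)=\rank(A-B)$ and the Huang--Wan line lemma) matches the paper's, and your treatments of (A1), (A2) and (A5) are essentially the paper's arguments. In particular your (A5) is sound: a third point $p$ on the line through $A$ and $B$ exists because $|\cD|\neq 2$, and for any $X$ with $d(X,p)=\min\{m,n\}$ the line lemma, together with the nonexistence of distance $\min\{m,n\}+1$, leaves room for at most one point of that line at distance $\min\{m,n\}-1$ from $X$, so at least one of $A,B$ is at maximal distance. (No genericity in the choice of $p$ is needed; any third point works.)

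The genuine gaps are in (A3) and, more seriously, in (A4). For (A3) you assert that a suitable rank-two $W$ ``can be explicitly exhibited'' using a scalar $\lambda\notin\{0,1\}$, but you never exhibit it, and you misplace the role of $|\cD|\neq2$, which the paper uses only for (A5). The paper's construction (with $Z=0$, $Y=E_{11}$) is simply $W:=X+Y$; the only non-obvious claim, $\rank(X+Y)=2$, follows by applying the line lemma to the line $\{uE_{11}\mid u\in\cD\}$ and the point $-Y\neq Z$ on it, and this works even over $\cF_2$. For (A4) you propose to ``choose $W$ lying along $A$ and arrange that $W$ does not similarly reduce $B$,'' citing the line lemma as the natural tool; but the set of rank-one matrices $W$ with $\rank(X-W)=m-1$ is not a line, so that lemma does not apply, and the existence of such a $W$ with $\rank(Y-W)=m$ is precisely the content of (A4) --- it is not a generic fact, as the paper's Example~\ref{ex:A4} shows by exhibiting a failure of (A4) in the closely related space $\cS_2(\cF_3)$. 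The paper's actual argument (case $n\ge m$, $Z=0$, $\rank X=\rank Y=m$) shows that for some index $i$ the affine subspaces $x_i+\spn(x_1,\ldots,\hat{x}_i,\ldots,x_m)$ and $y_i+\spn(y_1,\ldots,\hat{y}_i,\ldots,y_m)$ built from the rows of $X$ and $Y$ must differ (otherwise $X=Y$), picks $w$ in the first but not the second, and takes $W$ to be the matrix whose $i$th row is $w$ and whose other rows vanish. Without an argument of this kind your verification of (A4) is incomplete.
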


\begin{proof}
We denote by $E_{jk}\in \MM$ the matrix whose $(j,k)$ entry equals $1$, whereas
all other entries are $0$. All unordered pairs of matrices with a fixed
distance $k$ are in one orbit under the action of the group $G\MM$. When
exhibiting such a pair we may therefore assume without loss of generality the
two matrices to be $0$ and $E_{11}+E_{22}+\cdots+E_{kk}$.
\par
First, we restrict ourselves to the case $n\geq m$.
\par
Ad (A1): This is immediate from \eqref{eq:d=rank}.
\par
Ad (A2): Let $X=0$ and $Y=\sum_{i=1}^k E_{ii}$. Then $Z:=\sum_{j=1}^n E_{jj}$
has the required properties.
\par
Ad (A3): Let $Y=E_{11}$, $Z=0$, and $X$ be given, where $d(X,Z)=\rank(X)=1$ and
$d(X,Y)=2$. The line joining $Y$ and $Z$ equals $\{ u E_{11} \mid u\in\cD\}$.
The points $Y,Z,-Y$ are on this line. By the preceding remark, all points of
this line, except for $Z$, are at distance $2$ from $X$. In particular,
$d(X,-Y)=2$. Now define $W:=X+Y$. Then $d(W,X)=\rank(Y)=1$, $d(W,Y)=\rank(X)=1$
and $d(W,Z)=\rank(X-(-Y))=d(X,-Y)=2$.
\par
Ad (A4): Let $X\neq Y$ and $Z=0$, whence $\rank(X)=\rank(Y)=m$. With
$x_1,x_2,\ldots,x_m$ and $y_1,y_2,\ldots,y_m$ denoting the row vectors of $X$
and $Y$, respectively, we claim that there exists such an $i\in
\{1,2,\ldots,m\}$ that in $\cD^n$ the $(m-1)$-dimensional affine subspaces
\begin{eqnarray*}
    U_{X,i}&:=&x_i+\spn(x_1,x_2,\ldots,\hat{x}_i,\ldots,x_m),\\
    U_{Y,i}&:=&y_i+\spn(y_1,y_2,\ldots,\hat{y}_i,\ldots,y_m)
\end{eqnarray*}
are distinct. (The notation $\hat{x}_i$ means that this vector is omitted.)
Assume to the contrary that this would not be the case. Then, for any fixed
index $j\in\{1,2,\ldots,m\}$, we would obtain that
\begin{equation*}
    x_j\in\spn(x_1,x_2,\ldots,\hat{x}_k,\ldots,x_m)=\spn(y_1,y_2,\ldots,\hat{y}_k,\ldots,y_m)
\end{equation*}
for all $k\neq j$, whence $x_j\in\spn(y_j)$ due to the linear independence of
the row vectors of $Y$. Furthermore, $U_{X,j}=U_{Y,j}$ would give $x_j=y_j$.
Since $j$ was chosen arbitrarily, we would obtain $X=Y$, a contradiction.
\par
So, we may choose a vector $w\in U_{X,i}\setminus U_{Y,i}$. Define a matrix
$W\in \MM$ as follows: Its $i$th row is equal to $w$, all other rows are $0$.
Then $\rank(W)=1$, $\rank(X-W)=m-1$, and $\rank(Y-W)=m$, as required.
\par
Ad (A5): It suffices to consider the case $A=0$ and $B=E_{11}$. By $|\cD|\ne
2$, the line $\{uE_{11}\mid u\in\cD\}$ contains a point $P\neq A,B$. Let
$X\in\MM$ be any point with $d(X,P)=m$. By the remarks preceding
Lemma~\ref{lem:MM_A1-5} and due to the fact that points with distance $m+1$ do
not exist, at most one of $A$ and $B$ is at distance $m-1$ from $X$.
\par
The case $n\leq m$ can be shown similarly by considering columns of matrices as
vectors of a right vector space over $\cD$.
\end{proof}

By combining Theorem~\ref{thm:hauptsatz} and Lemma~\ref{lem:MM_A1-5} we obtain:

\begin{theorem}\label{thm:rechteck}
Let $\cD,\cD'$ be division rings with $|\cD|,|\cD'|\neq 2$. Let $m,n,p,q$ be
integers $\ge 2$. If $\phi : \MM \to M_{p\times q}(\cD')$ is a surjection which
satisfies
\begin{eqnarray*}
    &\rank(A-B)=\min\{m,n\}  \;\;\Leftrightarrow\;\;
    \rank(A^{\phi}-B^{\phi})=\min\{p,q\}& \\
       &\makebox[0.9\textwidth]{\hfill for all~~$A,B\in \MM$},&
\end{eqnarray*}
then $\phi$ is bijective. Both $\phi$ and $\phi^{-1}$ preserve adjacency of
matrices. Moreover, $\min\{m,n\}=\min\{p,q\}$.
\end{theorem}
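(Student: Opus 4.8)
The plan is to deduce Theorem~\ref{thm:rechteck} directly from Theorem~\ref{thm:hauptsatz} together with Lemma~\ref{lem:MM_A1-5}, so that essentially nothing new has to be proved. First I would record that by Lemma~\ref{lem:MM_A1-5} both graphs on $\MM$ and on $M_{p\times q}(\cD')$ satisfy (A1)--(A5). Next I would identify the diameters: by \eqref{eq:d=rank} the distance of two matrices is the rank of their difference, and the maximal possible rank of an $m\times n$ matrix is $\min\{m,n\}$, which is attained (e.g.\ by $E_{11}+\cdots+E_{\min\{m,n\},\min\{m,n\}}$ versus the zero matrix); hence $\diam\bigl(\MM\bigr)=\min\{m,n\}$, and likewise $\diam\bigl(M_{p\times q}(\cD')\bigr)=\min\{p,q\}$.

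With these identifications the hypothesis of the theorem, namely
\begin{equation*}
    \rank(A-B)=\min\{m,n\}\;\;\Leftrightarrow\;\;\rank(A^\phi-B^\phi)=\min\{p,q\}\quad\text{for all }A,B\in\MM,
\end{equation*}
is precisely condition~\eqref{eq:dm-treu} of Theorem~\ref{thm:hauptsatz} for the surjection $\phi$. Therefore Theorem~\ref{thm:hauptsatz} applies and yields that $\phi$ is an isomorphism of graphs; in particular $\phi$ is bijective, and both $\phi$ and $\phi^{-1}$ map edges to edges, i.e.\ preserve adjacency of matrices in both directions. The final clause of Theorem~\ref{thm:hauptsatz} gives $\diam\bigl(\MM\bigr)=\diam\bigl(M_{p\times q}(\cD')\bigr)$, which by the diameter computation above means $\min\{m,n\}=\min\{p,q\}$.

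There is essentially no obstacle here: the only things to check are the (entirely routine) translations ``distance $=$ rank of the difference'' and ``diameter $=\min\{m,n\}$'', both of which are immediate from \eqref{eq:d=rank}, together with the observation that the displayed biconditional in the statement is literally \eqref{eq:dm-treu} once the diameters are named. The genuine work has already been done, in Lemma~\ref{lem:MM_A1-5} (verifying (A1)--(A5), in particular the slightly delicate argument for (A4) using affine hyperplanes of row spaces, and for (A5) using a third point on a line together with the non-existence of pairs at distance $\min\{m,n\}+1$) and in the general Theorem~\ref{thm:hauptsatz}. So the proof of Theorem~\ref{thm:rechteck} is just the sentence ``combine Lemma~\ref{lem:MM_A1-5} with Theorem~\ref{thm:hauptsatz}, noting $\diam=\min\{m,n\}$,'' possibly spelled out over a few lines.
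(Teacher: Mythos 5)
Your proposal is correct and coincides with the paper's own argument: the theorem is stated there as an immediate consequence of combining Theorem~\ref{thm:hauptsatz} with Lemma~\ref{lem:MM_A1-5}, the only glue being the identification $\diam = \min\{m,n\}$ via \eqref{eq:d=rank}, which you carry out explicitly.
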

The fundamental theorem of the geometry of rectangular matrices
\cite[Theorem~3.4]{Wan1996a} can be used to explicitly describe a mapping
$\phi$ as in the theorem. As a further consequence, the existence of $\phi$
implies that $\cD$ and $\cD'$ are isomorphic or anti-isomorphic division rings,
and that $\{m,n\}=\{p,q\}$.

\subsection{Geometry of Hermitian and symmetric matrices}\label{subse:Hermite}

Let $\cD$ be a division ring which possesses an involution, i.~e.\ an
anti-automorphism of $\cD$ whose square equals the identity map $\id$ of $\cD$.
Throughout this subsection, we choose one involution, say $\inv$, of $\cD$.
Also, we assume that the following restrictions are satisfied:

\begin{itemize}
\item[(R1)]
The set $\cF$ of fixed elements of $\inv$ has more than three elements in
common with the centre $Z(\cD)$ of $\cD$.

\item[(R2)]
When $\inv$ is the identity map, whence $\cD=\cF$ is a field, then assume that
$\cF$ does not have characteristic $2$ (in symbols: $\ch(\cF)\neq 2$).

\end{itemize}
Let $\HH $ denote the space of Hermitian $n\times n$ matrices over $\cD$ (with
respect to $\inv$), where $n\ge 2$. If $\inv$ is the identity map, then $\HH
=:\cS_n(\cF)$ is the space of symmetric $n\times n$ matrices over $\cF$.

We call any Hermitian matrix in $\HH $ a \emph{point\/} and adopt the
\emph{adjacency relation\/} from \ref{subse:rechteck}, i.~e., $A,B\in\HH $ are
adjacent precisely when $\rank(A-B)=1$. This turns $\HH $ into a graph. We
recall that the group $G\HH $ of transformations
\begin{equation}\label{eq:herm-trafo}
    \HH \to\HH  : X \mapsto P X\overline{P}{}^t + H,
\end{equation}
where $P\in\GL_n(\cD)$ and $H\in \HH $, is a subgroup of the automorphism group
of the graph on $\HH $.
\par
For any two matrices $A,B\in \HH $ the distance $d(A,B)$ in the graph on $\HH$
equals $\rank(A-B)$. This can be shown, mutatis mutandis, as in
\cite[Proposition~5.5]{Wan1996a}, because (R2) guarantees that any Hermitian
matrix is cogredient to a matrix of the form $\sum_{i=1}^n a_i E_{ii}$ with
$a_i\in\cF$. See, for example, \cite[p.~15]{Dieudonne1971}.
\par

\begin{lemma}\label{lem:Lemma2.2}
Let $A\in\HH $ be a matrix with $\rank(A)=k+1$. A matrix $B\in\HH $ has rank
$1$ and $\rank(A-B)=k$ if, and only if, there exists an $x\in \cD^n$ with
$xA\overline{x}{}^t\neq 0$ and
\begin{equation*}
    B=\overline{(xA)}{}^t  (xA\,\overline{x}^t){}^{-1}   (xA).\end{equation*}
\end{lemma}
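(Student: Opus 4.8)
The plan is to prove both implications by reducing to a cogredience normal form. First I would invoke the fact, recalled just before the lemma, that every Hermitian matrix is cogredient over $\cD$ to a diagonal matrix $\sum_i a_i E_{ii}$ with $a_i\in\cF$; combined with the $G\HH$-invariance of rank and of the adjacency relation, this lets me assume $A=\sum_{i=1}^{k+1} E_{ii}$ after applying a transformation $X\mapsto PX\overline P{}^t$ (and tracking how the claimed formula for $B$ transforms under such a map — this is routine since $xA\overline x{}^t$ is a scalar and the expression $\overline{(xA)}{}^t(xA\overline x{}^t)^{-1}(xA)$ is manifestly equivariant).

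For the ``if'' direction: suppose $x\in\cD^n$ satisfies $xA\overline x{}^t\neq 0$ and $B$ is given by the displayed formula. Write $v:=xA$ (a row vector) and $c:=v\overline x{}^t=xA\overline x{}^t\in\cF\setminus\{0\}$ — one checks $c\in\cF$ since $A$ is Hermitian. Then $B=\overline v{}^t c^{-1} v$, which visibly has rank $1$, and $B$ is Hermitian because $\overline c = c$. It remains to show $\rank(A-B)=k$. The natural way is to exhibit a vector in the left kernel of $A-B$ that is independent modulo the kernel of $A$, thereby raising the nullity from $n-(k+1)$ to $n-k$, and simultaneously show the nullity does not increase further. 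Concretely, $xB = x\,\overline v{}^t c^{-1} v = (x A\overline x{}^t)c^{-1}v = v = xA$, so $x(A-B)=0$; and since $xA=v\neq 0$, the row $x$ lies outside the left kernel of $A$. Conversely, if $u(A-B)=0$ then $uA=uB=\lambda v$ for some scalar $\lambda$ depending linearly on $u$, so $u - c^{-1}(\text{suitable scalar})\,x$ lies in the left kernel of $A$; a short computation with $\lambda$ shows the left kernel of $A-B$ is exactly the left kernel of $A$ plus the span of $x$, giving nullity $n-k$ and hence $\rank(A-B)=k$.

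For the ``only if'' direction: assume $\rank(B)=1$ and $\rank(A-B)=k$. Since $B$ is Hermitian of rank $1$, a standard normal-form argument (again using R2 to diagonalize, or directly) gives $B=\overline w{}^t \mu w$ for some nonzero row vector $w\in\cD^n$ and some $\mu\in\cF\setminus\{0\}$. The task is then to produce $x$ with $xA=w$ (up to the normalization built into the formula) and $xA\overline x{}^t\neq 0$. The condition $\rank(A-B)=k=\rank(A)-1$ forces the rank-$1$ ``correction'' $B$ to be non-generic relative to $A$: running the kernel computation from the previous paragraph in reverse, $\rank(A-B)<\rank(A)$ is only possible if the left kernel of $A$ is not contained in the left kernel of $B$ is false — rather, it forces $w$ to lie in the row space of $A$, i.e.\ $w=xA$ for some $x$, and forces the pairing $xA\overline x{}^t$ to be nonzero (otherwise one checks $\rank(A-B)$ would be $k+1$ or the ranks would be incompatible). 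Matching the scalar $\mu$ with $(xA\overline x{}^t)^{-1}$ by rescaling $x$ then yields exactly the displayed formula.

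The main obstacle I anticipate is the ``only if'' direction, specifically showing that the rank condition forces $w\in\operatorname{rowspace}(A)$ and $xA\overline x{}^t\neq 0$ rather than merely being consistent with it; over a noncommutative division ring one must be careful about left versus right row/column spaces and about where scalars sit, and the Hermitian symmetry must be used at the right moment to guarantee $xA\overline x{}^t$ lands in $\cF$ and is invertible. A clean way around this is to first handle the diagonal normal form $A=\sum_{i\le k+1}E_{ii}$ explicitly — where row space, column space and kernels are transparent — prove the equivalence there by direct computation, and then transport the statement back along the cogredience transformation using the equivariance noted above.
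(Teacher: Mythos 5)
Your proposal is essentially sound, but it takes a genuinely different route from the paper, for the simple reason that the paper does not prove the lemma at all: it cites Lemma~2.2 of \cite{HuangHoeferWan2004} and only records the two adjustments needed in the more general setting where $\cF$ need not be central (scalars such as $(xA\overline{x}{}^t)^{-1}$ must sit \emph{between} $\overline{(xA)}{}^t$ and $xA$, and (R2) still yields cogredience to a diagonal matrix with entries in $\cF$). Your kernel computation replaces that citation by a self-contained argument, and it does work: in the ``if'' direction, $x(A-B)=0$ together with the observation that $u\,\overline{(xA)}{}^t=uA\overline{x}{}^t=0$ for every $u$ in the left kernel of $A$ gives that the left kernel of $A-B$ is exactly the left kernel of $A$ plus $\spn(x)$, whence $\rank(A-B)=k$; in the ``only if'' direction, writing $B=\overline{w}{}^t\mu w$ with $\mu\in\cF\setminus\{0\}$ (which is the correct normal form for a rank-one Hermitian matrix and does not need (R2)), the identity $u(A-B)=0\Leftrightarrow uA=(u\overline{w}{}^t)\mu w$ shows that if $w$ is outside the left row space of $A$ the two sides must vanish separately and $\rank(A-B)\ge k+1$, while for $w=xA$ the existence of a kernel vector outside the left kernel of $A$ forces $(xA\overline{x}{}^t)\mu=1$, which is precisely the displayed formula. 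Two caveats. First, your opening reduction to $A=\sum_{i\le k+1}E_{ii}$ is not available: cogredience only gives $\diag(a_1,\ldots,a_{k+1},0,\ldots,0)$ with $a_i\in\cF\setminus\{0\}$, and the $a_i$ cannot in general be normalised to $1$ (think of a negative definite real symmetric matrix). Fortunately your kernel argument nowhere uses diagonality, only that $A$ is Hermitian, so this step should simply be deleted rather than repaired. Second, the sentence in your ``only if'' paragraph about left kernels is garbled as written and must be replaced by the explicit dichotomy just described. With those repairs your proof is complete and, being self-contained, arguably more useful than the paper's own one-line reference.
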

\begin{proof}
This is a slight generalisation of Lemma 2.2 in \cite{HuangHoeferWan2004},
since we do \emph{not\/} assume $\cF\subseteq Z(\cD)$. However, the proof given
there can be carried over to our more general settings in a straightforward
way. On the one hand, all scalars in $\cF$ (like $(xA\,\overline{x}^t){}^{-1}$
in the definition of $B$ from above) have to be written \emph{between\/} a
matrix and its Hermitian transpose rather than on the left hand side (as in
\cite{HuangHoeferWan2004}). Also, one has to take into account what we already
noticed before: In the presence of restriction (R2), any Hermitian matrix is
cogredient to a diagonal matrix (with entries in $\cF$) irrespective of whether
$\cF$ is in the centre of $\cD$ or not.
\end{proof}

\begin{lemma}\label{lem:x.A.B}
Let $A,B\in\HH $ be non-zero, and suppose that there exists $P\in\GL_n(\cD)$
such that
\begin{equation*}
    PA\overline{P}{}^t = \diag(a_1,a_2,\ldots,a_k,0,\ldots,0)
    \mbox{~~and~~}
    PB\overline{P}{}^t =\begin{pmatrix}B_1 & 0\\0&0\end{pmatrix},
\end{equation*}
where $k=\rank(A)$ and $B_1$ denotes a Hermitian matrix of size $\leq k$. Then
there is a vector $x\in \cD^n$ such that
\begin{equation*}
    x A \overline{x}{}^t\neq 0 \mbox{~~and~~}
    x B \overline{x}{}^t\neq 0.
\end{equation*}
\end{lemma}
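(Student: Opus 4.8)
The plan is to work with the diagonal form of $A$ given by $P$ and to locate a vector $x\in\cD^n$ that simultaneously avoids the ``cone'' $xA\overline{x}^t=0$ and the ``cone'' $xB\overline{x}^t=0$. After conjugating by $P$ we may assume $A=\diag(a_1,\dots,a_k,0,\dots,0)$ with all $a_i\in\cF\setminus\{0\}$ and $B=\left(\begin{smallmatrix}B_1&0\\0&0\end{smallmatrix}\right)$ with $B_1$ Hermitian of size $\le k$; note that this substitution is harmless since both $xA\overline{x}^t$ and $xB\overline{x}^t$ transform by the \emph{same} scalar factor when $x$ is replaced by $xP$. First I would dispose of $B$ alone: since $B\ne 0$ there is some $x^{(0)}$ with $x^{(0)}B\overline{x^{(0)}}^t\ne 0$; this is just the fact that a nonzero Hermitian matrix has a nonzero ``value'' on some vector, which follows from the diagonalisability of Hermitian matrices guaranteed by (R2). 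Similarly $A\ne 0$ gives an $x^{(1)}$ with $x^{(1)}A\overline{x^{(1)}}^t\ne 0$; in fact here $e_1$ works. So each cone alone is proper, and the task is to find a vector outside their union.

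The key step is a ``pencil'' argument. I would consider the family of vectors $x=x^{(1)}+t\,x^{(0)}$ for $t$ ranging over the central fixed field $\cF\cap Z(\cD)$, which by (R1) has more than three elements. Expanding, $f(t):=xA\overline{x}^t$ and $g(t):=xB\overline{x}^t$ become ``polynomial-like'' expressions in $t$ with coefficients in $\cF$; the subtle point is that $t$ is central, so cross terms like $t\,(x^{(0)}A\overline{x^{(1)}}^t)$ and $(x^{(1)}A\overline{x^{(0)}}^t)\,\overline{t}=t\,(\dots)$ combine cleanly and $f,g$ are honest quadratic polynomials in the commutative variable $t\in\cF\cap Z(\cD)$. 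Each of $f,g$ is a nonzero polynomial of degree $\le 2$: $f$ because $f(0)=x^{(1)}A\overline{x^{(1)}}^t\ne 0$ after possibly swapping the roles so that the ``base point'' is the one where $A$ is nonzero, and $g$ likewise — one has to arrange the starting vectors so that both $f$ and $g$ are not identically zero, which is where I would use the two auxiliary vectors $x^{(0)},x^{(1)}$ above together with a short case analysis. A nonzero quadratic over the field $\cF\cap Z(\cD)$ has at most $2$ roots, so $f$ and $g$ together have at most $4$ bad values of $t$; since $|\cF\cap Z(\cD)|>4$ (i.e.\ at least $5$ by (R1), counting $0$), there is a value $t$ with $f(t)\ne 0$ and $g(t)\ne 0$, and the corresponding $x$ is the desired vector.

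The main obstacle is the bookkeeping needed to guarantee that \emph{neither} $f$ \emph{nor} $g$ is the zero polynomial after we fix the two base vectors, since a priori the only vector witnessing $g\ne 0$ might lie on the $A$-cone and vice versa. I expect to resolve this by choosing $x^{(1)}$ with $x^{(1)}A\overline{x^{(1)}}^t\ne 0$ (e.g.\ $x^{(1)}=e_1$) as the base point of the pencil; then $f(0)\ne 0$ automatically, so $f\not\equiv 0$. For $g$: if $g(0)=x^{(1)}B\overline{x^{(1)}}^t\ne 0$ we are already done with $t=0$; otherwise $g(0)=0$, and I would pick $x^{(0)}$ with $x^{(0)}B\overline{x^{(0)}}^t\ne 0$, so that the leading coefficient of $g$ is $x^{(0)}B\overline{x^{(0)}}^t\ne 0$, forcing $g\not\equiv 0$. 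With both polynomials nonzero of degree $\le 2$, the counting argument closes. A minor secondary point, worth a sentence in the write-up, is checking that the reduction through $P$ is legitimate and that one really only needs $\cF\cap Z(\cD)$ to have more than $4$ elements — which is exactly what hypothesis (R1) supplies.
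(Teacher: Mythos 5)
Your pencil argument is a genuinely different route from the paper's proof, and most of it is sound: the reduction via $P$ is legitimate, the expressions $f(t)$ and $g(t)$ really are quadratics in the central variable $t$ with coefficients in $\cD$, a nonzero such quadratic has at most two roots in a central subfield, and your bookkeeping does guarantee $f\not\equiv 0$ (nonzero constant term) and $g\not\equiv 0$ (nonzero leading term when $g(0)=0$). The gap is in the final count. You need a value of $t$ avoiding up to $2+2=4$ roots, hence at least $5$ elements in $\cF\cap Z(\cD)$, and you assert that (R1) supplies ``more than $4$'' elements. It does not: (R1) only says $|\cF\cap Z(\cD)|>3$, i.e.\ at least $4$. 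The borderline case is realisable: take $\cD=\mathbb{F}_{16}$ with the involution $x\mapsto x^4$, so that $\cF\cap Z(\cD)=\cF=\mathbb{F}_4$; this satisfies (R1) and (R2) (the involution is not the identity, so the characteristic-$2$ exclusion does not apply), yet your bad set --- two roots of $f$ in $K^*$ together with $0$ and possibly one further root of $g$ --- can a priori exhaust all four elements of $K=\mathbb{F}_4$. So the argument as written does not close.

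The paper's proof shows how to get by with only four central fixed elements: it first disposes of the case where $B$ has a nonzero diagonal entry $b_{ii}$ by taking $x=e_i$, and in the remaining case works with $x=(x_1,1,0,\ldots,0)$ where $x_1$ ranges over \emph{all} of $\cD$, not just over $\cF\cap Z(\cD)$. The condition $xB\overline{x}{}^t\neq 0$ is met by some $x_1'\in\cD$ because $\{\xi\in\cD\mid \xi=-\overline{\xi}\}$ cannot be all of $\cD$ under (R2); the condition $xA\overline{x}{}^t\neq 0$ is then repaired, if necessary, by replacing $x_1'$ with $\lambda x_1'$ for a single central fixed $\lambda\neq 0$ with $\lambda^2\neq 1$ --- and such a $\lambda$ exists as soon as $|\cF\cap Z(\cD)|\geq 4$, since $\{0,1,-1\}$ has at most three elements. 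In short, the paper spends the scarce central elements only on a one-shot correction factor, whereas your proof confines the entire search to the central fixed field and therefore demands one element more than (R1) provides. To salvage your approach you would have to treat the case $|\cF\cap Z(\cD)|=4$ separately (or borrow the paper's trick of letting one coordinate range over all of $\cD$).
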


\begin{proof}
Without loss of generality, let $k=n$, whence $A=\diag(a_1,a_2,\ldots,a_n)$,
$a_i\in \cF\setminus\{0\}$, and $B=(b_{ij})\neq 0$.
\par
\emph{Case 1.} $b_{ii}\neq 0$ for some $i$. Then $e_i$, viz.\ the $i$th vector
of the canonical basis of $\cD^n$, satisfies
\begin{equation*}
    e_i A \overline{e}_i{}^t=a_i\neq 0
    \mbox{~~and~~}
    e_i B \overline{e}_i{}^t=b_{ii}\neq 0.
\end{equation*}
\par
\emph{Case 2.} $b_{ii}=0$ for all $i$. Since $B\neq 0$, there exist $i,j$ with
$1\le i,j\le n$ and $i\neq j$ such that $b_{ij}\neq 0$. Without loss of
generality, we assume $b_{12}\neq 0$. Let $x=(x_1,1,0,\ldots,0)$, then $x
A\overline{x}{}^t= x_1 a_1 \overline{x}_1+a_2$ and $xB\overline{x}{}^t=x_1
b_{12}+\overline{x_1 b_{12}}$, so it is enough to find $x_1\in\cD$ such that
\begin{equation*}
    x_1 a_1\overline{x}_1\neq - a_2
    \mbox{~~and~~}
    x_1 b_{12} \neq -\overline{x_1 b_{12}}.
\end{equation*}

As $|\cF\cap Z(\cD)|> 3$, there exists $\lambda\in \big(\cF\cap Z(\cD)\big)
\setminus \{ 0 \} $ with $\lambda^2\neq 1$. Note that
$\cD=\{\xi\in\cD\mid\xi=-\overline{\xi}\}$ would imply $(\inv)=\id$ and
$\ch\cD=2$, which contradicts (R2). So, there is $x_1'\in\cD$ with $x_1' b_{12}
\neq -\overline{x_1' b_{12}}$. Define $x_1:=x_1'$ if $x_1 a_1 x_1'\neq -a_2$,
and $x_1:=\lambda x_1'$ if $x_1 a_1 x_1' = -a_2$.
\end{proof}

\begin{lemma}\label{lem:HH_A1-5}
The graph on $\HH $ satisfies conditions\/ \emph{(A1)--(A5)}.
\end{lemma}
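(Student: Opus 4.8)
The plan is to verify each of the five conditions in turn, exploiting as in Lemma~\ref{lem:MM_A1-5} that $d(A,B)=\rank(A-B)$ and that the group $G\HH$ acts transitively on unordered pairs of Hermitian matrices at a fixed distance; by (R2) every Hermitian matrix is cogredient to a diagonal matrix $\sum a_i E_{ii}$ with $a_i\in\cF$, so when exhibiting a pair at distance $k$ we may take it to be $0$ and $\sum_{i=1}^k a_i E_{ii}$ with suitable $a_i\in\cF\setminus\{0\}$. Conditions (A1) and (A2) are then immediate: (A1) from $d=\rank$ and $\rank(A)\le n$, and (A2) by taking $X=0$, $Y=\sum_{i=1}^k a_iE_{ii}$ and extending the diagonal to a full-rank diagonal matrix $Z$.

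For (A3) I would mimic the rectangular case: with $Z=0$ and $Y$ of rank $1$ one shows the line through $Y$ and $Z$ (inside the graph on $\HH$) behaves so that all its points but $Z$ are at distance $2$ from the given rank-$1$ matrix $X$, then set $W:=X+Y$ — but the cleaner route here is via Lemma~\ref{lem:Lemma2.2}. For (A4) the key is the existence of a point $w$ adjacent to $z$ with $d(x,w)=n-1$, $d(y,w)=n$; taking $z=0$ so $x,y$ have full rank, one needs a rank-$1$ Hermitian $W$ with $\rank(X-W)=n-1$ and $\rank(Y-W)=n$. By Lemma~\ref{lem:Lemma2.2}, rank-$1$ Hermitian matrices $W$ with $\rank(X-W)=n-1$ are exactly those of the form $\overline{(vX)}{}^t(vX\overline v{}^t)^{-1}(vX)$ with $vX\overline v{}^t\ne 0$; one must choose $v$ so that in addition $\rank(Y-W)=n$, i.e.\ $Y-W$ is invertible. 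This is a genericity argument — the bad $v$ lie in a proper subvariety — and the freedom needed to avoid it is supplied by $|\cF\cap Z(\cD)|>3$.

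For (A5) it suffices to treat $A=0$, $B=a_1E_{11}$ with $a_1\in\cF\setminus\{0\}$. Since $|\cF\cap Z(\cD)|>3$ (in particular $|\cD|>2$), the line through $A$ and $B$ contains a point $P\ne A,B$; one then argues, exactly as in the rectangular case and using that no point is at distance $n+1$, that any $X$ with $d(X,P)=n$ has at most one of $A,B$ at distance $n-1$, which is the required implication after noting $d(X,A),d(X,B)\le n$. The role of Lemma~\ref{lem:x.A.B} is presumably to handle a subtler point in (A5) or (A4): it guarantees, for non-zero $A,B$ in the relevant block form, a single vector $x$ with $xA\overline x{}^t\ne0$ and $xB\overline x{}^t\ne0$ simultaneously, which is precisely what lets one invoke Lemma~\ref{lem:Lemma2.2} to produce a common rank-$1$ Hermitian matrix adjacent to a prescribed point and still controlling two distances at once.

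The main obstacle will be (A4): one must produce a rank-$1$ Hermitian $W$ making $X-W$ of corank exactly $1$ while keeping $Y-W$ invertible, and both the Hermitian symmetry constraint (forcing $W$ into the form $\overline{(vX)}{}^t c\,(vX)$ of Lemma~\ref{lem:Lemma2.2}) and the need for an invertibility/genericity argument over a possibly non-commutative $\cD$ make this delicate. I expect the verification to reduce, after cogredience, to a careful choice of the vector $v$, with the quantitative hypothesis $|\cF\cap Z(\cD)|>3$ (and the auxiliary Lemmas \ref{lem:Lemma2.2} and \ref{lem:x.A.B}) doing the real work; the remaining conditions are routine adaptations of the rectangular-matrix arguments.
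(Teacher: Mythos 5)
Your overall architecture matches the paper's: normalise pairs via the action of $G\HH$ and the cogredience to diagonal form guaranteed by (R2), carry (A1), (A2), (A3) and (A5) over from the rectangular case with scalars taken from $\cF\cap Z(\cD)$, and treat (A4) separately with the help of Lemmas~\ref{lem:Lemma2.2} and~\ref{lem:x.A.B}. You also reduce (A4) to the right question: with $Z=0$ and $X,Y$ invertible, find $v$ with $vX\overline{v}{}^t\neq 0$ such that $W:=(\overline{vX}){}^t\,(vX\overline{v}{}^t)^{-1}\,(vX)$ satisfies $\rank(Y-W)=n$. But at exactly this point your argument stops being a proof: ``the bad $v$ lie in a proper subvariety'' is not a notion available over a possibly non-commutative division ring, and $|\cF\cap Z(\cD)|>3$ does not by itself furnish a counting or genericity argument in $\cD^n$. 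This is a genuine gap, and it is the one step of the lemma that needs a new idea rather than an adaptation of the rectangular case.

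The paper closes the gap as follows. Since $X\neq Y$ are both invertible, $B:=X-XY^{-1}X\neq 0$; apply Lemma~\ref{lem:x.A.B} to the pair $A:=X$ and this $B$ to obtain $v$ with $vX\overline{v}{}^t\neq 0$ \emph{and} $vX\overline{v}{}^t-v(XY^{-1}X)\overline{v}{}^t\neq 0$, and build $W$ from this $v$. If one had $d(Y,W)=n-1$, Lemma~\ref{lem:Lemma2.2} would give $u$ with $W=(\overline{uY}){}^t\,(uY\overline{u}{}^t)^{-1}\,(uY)$; comparing the two rank-one factorisations of $W$ shows that $uY$ and $vX$ are left-proportional, so one may normalise $uY=vX$, i.e.\ $u=vXY^{-1}$, whence $uY\overline{u}{}^t=v(XY^{-1}X)\overline{v}{}^t$ would have to equal $vX\overline{v}{}^t$ --- contradicting the choice of $v$. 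So the hypothesis $|\cF\cap Z(\cD)|>3$ enters only inside the proof of Lemma~\ref{lem:x.A.B}, not through any avoidance argument over $\cD^n$; the decisive move is the choice of the auxiliary matrix $X-XY^{-1}X$, which your plan does not identify. Your reading of the role of Lemma~\ref{lem:x.A.B} (``controlling two distances at once'') is correct in spirit, but without specifying \emph{which} second matrix to feed into it, and without the uniqueness argument for the representing vector, the verification of (A4) remains open.
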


\begin{proof}
When exhibiting two Hermitian matrices with distance $k$, we may assume, by
virtue of the action of $G\HH$, the matrices to be $0$ and
$a_1E_{11}+a_2E_{22}+\cdots+a_kE_{kk}$ with $a_1,a_2,\ldots,a_k\in\cF$. Taking
into account the previous remark, the proof for (A1), (A2), (A3), and (A5) can
be carried over almost unchanged from the proof of Lemma~\ref{lem:MM_A1-5}.
Only certain scalars have to chosen from $\cF\cap Z(\cD)$ rather than $\cD$.
\par
Our proof of (A4) is different though: Let $X\neq Y$ and $Z$ be matrices in
$\HH $ with $d(X,Z)=n$ and $d(Y,Z)=n$. Without loss of generality, we assume
$Z=0$ and $\rank(X)=\rank(Y)=n$. From Lemma~\ref{lem:x.A.B}, applied to $A:=X$
and $B:=X-XY^{-1}X\neq 0$, there exists a vector $v\in \cD^n$ such that
\begin{equation*}
    v X \overline{v}{}^t\neq 0
    \mbox{~~and~~}
    v X\overline{v}{}^t-v(XY^{-1}X)\overline{v}{}^t\neq 0.
\end{equation*}
We define
\begin{equation}\label{eq:Wv}
    W:=(\overline{vX})^t \, (vX\overline{v}{}^t)^{-1} \, (vX) \in\HH .
\end{equation}
Then $d(Z,W)=1$ and $d(Y,W)\ge n-1$ are obvious, whereas Lemma~\ref{lem:x.A.B}
shows $d(X,W)=n-1$. Let us suppose $d(Y,W)=n-1$. By Lemma~\ref{lem:Lemma2.2},
there exists a vector $u\in \cD^n$ such that
\begin{equation}\label{eq:Wu}
    W=(\overline{uY}){}^t  \,  (uY\overline{u}{}^t)^{-1}  \,   (uY).
\end{equation}
We infer from \eqref{eq:Wu} and \eqref{eq:Wv} that $uY$ and $vX$ are
left-proportional by a non-zero factor in $\cD$. Since $u$ is determined up to
a non-zero factor in $\cD$ only, we may therefore even suppose $uY=vX$.
Comparing \eqref{eq:Wu} with \eqref{eq:Wv} yields
$vX\overline{v}^t=uY\overline{u}^t$. This implies that
$vX\,\overline{v}^t-v(XY^{-1}X)\,\overline{v}^t=0$, a contradiction. So we must
have $d(Y,W)=n$.
\end{proof}

\begin{theorem}\label{thm:Hermite}
Let $\cD, \cD'$ be division rings which possess involutions $\inv$ and
$\invstrich$, respectively, subject to the restrictions\/ \emph{(R1)} and\/
\emph{(R2)}. Let $n,n'$ be integers $\ge 2$. If $\phi: \HH \to \cH_{n'}(\cD')$
is a surjection which satisfies
\begin{equation*}
    \rank(A-B)=n \;\;\Leftrightarrow\;\; \rank(A^{\phi}-B^{\phi})=n'
    \mbox{~~for all~~} A,B\in \cH_n(\cD),
\end{equation*}
then $\phi$ is bijective. Both $\phi$ and $\phi^{-1}$ preserve adjacency of
Hermitian matrices. Moreover, $n=n'$.
\end{theorem}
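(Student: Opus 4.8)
The plan is to deduce Theorem~\ref{thm:Hermite} by instantiating the general machinery of Section~\ref{se:hauptsatz}. By Lemma~\ref{lem:HH_A1-5}, both graphs on $\HH$ and on $\cH_{n'}(\cD')$ satisfy conditions (A1)--(A5). Since distance in the graph on $\HH$ equals $\rank(A-B)$ (as recorded in the paragraph preceding Lemma~\ref{lem:Lemma2.2}), we have $\diam\HH=n$ and likewise $\diam\cH_{n'}(\cD')=n'$: indeed every Hermitian matrix is cogredient to a diagonal matrix over $\cF$, so there exist matrices of every rank up to $n$, and no rank can exceed $n$. Therefore the hypothesis $\rank(A-B)=n\Leftrightarrow\rank(A^\phi-B^\phi)=n'$ is exactly the condition \eqref{eq:dm-treu} that $\phi$ sends pairs at distance $\diam\Gamma$ to pairs at distance $\diam\Gamma'$ and conversely.

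With these identifications in place, Theorem~\ref{thm:hauptsatz} applies verbatim: a surjection $\phi:\cP\to\cP'$ between graphs from our class satisfying \eqref{eq:dm-treu} is a graph isomorphism. Unwinding the conclusion, $\phi$ is in particular a bijection, and both $\phi$ and $\phi^{-1}$ preserve adjacency of points, i.e.\ of Hermitian matrices. The equality $\diam\Gamma=\diam\Gamma'$ furnished by Theorem~\ref{thm:hauptsatz} then reads $n=n'$.

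There is essentially no obstacle here; the only thing to be careful about is the bookkeeping that translates the statement of Theorem~\ref{thm:Hermite} into the abstract hypotheses. Concretely, one must note that $|\cD|\neq 2$ is automatic under (R1) (since $|\cF\cap Z(\cD)|>3$), so Lemma~\ref{lem:HH_A1-5} does apply without an extra cardinality assumption; and one must invoke the distance formula $d(A,B)=\rank(A-B)$ to identify the graph-theoretic diameter with $n$ on both sides. Once that is said, the proof is a one-line appeal: \emph{By Lemma~\ref{lem:HH_A1-5} the graphs on $\HH$ and $\cH_{n'}(\cD')$ satisfy \emph{(A1)--(A5)}, their diameters are $n$ and $n'$ respectively, and the hypothesis on $\phi$ is precisely \eqref{eq:dm-treu}; hence Theorem~\ref{thm:hauptsatz} gives the assertion.}
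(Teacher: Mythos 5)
Your proposal is correct and is exactly the paper's (implicit) argument: Theorem~\ref{thm:Hermite} is obtained by combining Lemma~\ref{lem:HH_A1-5} with Theorem~\ref{thm:hauptsatz}, using the distance formula $d(A,B)=\rank(A-B)$ to identify the diameters with $n$ and $n'$. Your side remarks on the bookkeeping (the role of (R1) in place of $|\cD|\neq 2$) are accurate and consistent with the paper.
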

A prospective \emph{fundamental theorem of the geometry of Hermitian
matrices\/} should describe all bijections $\HH\to \cH_{n'}(\cD')$ which
preserve adjacency in both directions. However, such a fundamental theorem
seems to be known only under additional assumptions on the division rings,
their involutions, and/or the numbers $n,n'$. We refer to \cite{Huang2006x},
\cite{WanHuang2002}, \cite{WanHuang2006}, and \cite[Chapter~6]{Wan1996a} for
further details. Each of these results can be used to (\emph{i\/}) explicitly
describe a mapping $\phi$ as in the theorem and (\emph{ii\/}) to derive from
the existence of $\phi$ that $\cD$ and $\cD'$ are isomorphic division rings.
\par
We close this subsection with some examples in which one or even both of the
restrictions (R1) and (R2) dropped.

\begin{example}\label{ex:A4}
Let $\cF_3$ be the field with three elements. We exhibit the space of symmetric
$2\times 2$ matrices over $\cF_3$. The graph on $\cS_2(\cF_3)$ has $27$ points
and diameter $2$. It is easy to verify conditions (A1), (A2), (A3), and (A5) as
before.
\par
In what follows we establish that (A4) is not satisfied. Figure~\ref{fig:A4}
depicts five points of the graph on $\cS_2(\cF_3)$ and all edges between them.
\begin{figure}[ht!]\footnotesize\unitlength5pt
\centering
\begin{picture}(48.2,9.4)\thicklines
    \put(0,1.4){$Y=\begin{pmatrix}2&2\\2&1\end{pmatrix}$}
    \put(0,6.6){$X=\begin{pmatrix}1&0\\0&2\end{pmatrix}$}

    \put(10.5,1.8){\line(3,2){7.8}}
    \put(10.5,7.0){\line(3,-2){7.8}}
    \put(10.5,1.8){\line(1,0){7.8}}
    \put(10.5,7.0){\line(1,0){7.8}}

    \put(19.2,1.4){$V=\begin{pmatrix}0&0\\0&2\end{pmatrix}$}
    \put(19.2,6.6){$U=\begin{pmatrix}1&0\\0&0\end{pmatrix}$}

    \put(38.5,4.0){$\begin{pmatrix}0&0\\0&0\end{pmatrix}=Z$}

    \put(29.8,1.8){\line(3,1){7.8}}
    \put(29.8,7.0){\line(3,-1){7.8}}
\end{picture}
\caption{A counterexample for (A4) and Lemma~\ref{lem:A1-4}}\label{fig:A4}
\end{figure}
It is straightforward to show that $(X,U,Z)$ and ($X,V,Z)$ are the only two
geodesics from $X$ to $Z$. However, both $U$ and $V$ are neighbours of $Y\neq
X$, whence we cannot find a matrix $W$ to satisfy (A4).
\par
Furthermore, property (\ref{eq:es_gibt_p}) holds for $A:=X$, $B:=Z$, and
$P:=Y$. Indeed, $U$ and $V$ are the only points of $\cS_2(\cF_3)$ which are
adjacent to $A$ \emph{and\/} $B$, but none of them is at distance $2$ from $P$.
Yet, in contrast to the assertion of Lemma~\ref{lem:A1-4}, the points $A$ and
$B$ are not adjacent.
\par
Nevertheless, any mapping $\phi:\cS_2(\cF_3)\to\cS_2(\cF_3)$ as in
Theorem~\ref{thm:Hermite} is an automorphism of the graph on $\cS_2(\cF_3)$, a
fact which is immediate from the following observation: Given a mapping
$\phi:\cP\to\cP$ as in Theorem~\ref{thm:hauptsatz}, where $\Gamma=\Gamma'$ is a
finite graph with diameter $\diam\Gamma=2$, the surjectivity of $\phi$ implies
its being a bijection. Furthermore, since distance $2$ is preserved under
$\phi$ and $\phi^{-1}$, so is distance $1$. Hence $\phi$ is an automorphism.
\end{example}

\begin{example}\label{ex:A5}
Let $\cF_2$ be the field with two elements. We exhibit the space of symmetric
$2\times 2$ matrices over $\cF_2$. The graph on $\cS_2(\cF_2)$ has $8$ points
and diameter $3$, an illustration is given in Figure~\ref{fig:A5}. It is
straightforward to show that conditions (A1), (A2), (A3), and (A4) are
satisfied, whereas (A5) does not hold.
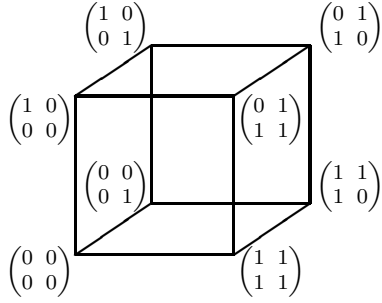
\begin{figure}[ht!]\footnotesize\unitlength12pt
\centering
\begin{picture}(11.6,9.5)(-1.5,-1)\thicklines
        \put(0.4,0.4){\line(1,0){5}}
        \put(0.4,5.4){\line(1,0){5}}
        \put(0.4,0.4){\line(0,1){5}}
        \put(5.4,0.4){\line(0,1){5}}
        \put(2.8,2.0){\line(1,0){5}}
        \put(2.8,7.0){\line(1,0){5}}
        \put(2.8,2.0){\line(0,1){5}}
        \put(7.8,2.0){\line(0,1){5}}
        \put(0.4,0.4){\line(3,2){2.4}}
        \put(0.4,5.4){\line(3,2){2.4}}
        \put(5.4,0.4){\line(3,2){2.4}}
        \put(5.4,5.4){\line(3,2){2.4}}
    \put(-1.8,-0.3){$\begin{pmatrix}0\;\;0\\0\;\;0\end{pmatrix}$}
    \put(-1.8,4.5){$\begin{pmatrix}1\;\;0\\0\;\;0\end{pmatrix}$}
    \put(0.6,2.4){$\begin{pmatrix}0\;\;0\\0\;\;1\end{pmatrix}$}
    \put(0.6,7.4){$\begin{pmatrix}1\;\;0\\0\;\;1\end{pmatrix}$}
    \put(5.5,-0.3){$\begin{pmatrix}1\;\;1\\1\;\;1\end{pmatrix}$}
    \put(8,2.4){$\begin{pmatrix}1\;\;1\\1\;\;0\end{pmatrix}$}
    \put(8,7.4){$\begin{pmatrix}0\;\;1\\1\;\;0\end{pmatrix}$}
    \put(5.5,4.5){$\begin{pmatrix}0\;\;1\\1\;\;1\end{pmatrix}$}
\end{picture}
\caption{A counterexample for (A5) }\label{fig:A5}
\end{figure}
\par
Another way of seeing that the graph on $\cS_2(\cF_2)$ cannot satisfy all
conditions (A1)--(A5) is as follows. Suppose that $\Gamma$ is a graph with
diameter $\diam\Gamma\geq 3$ such that there exist points $a,a^*\in\cP$ with
 $d(a,a^*)=\diam\Gamma$ and $d(a,x)\neq\diam\Gamma\neq d(a^*,x)$ for all
$x\in\cP\setminus\{a,a^*\}$. Let $\phi:\cP\to\cP$ be the bijection which
interchanges $a$ with $a^*$ and leaves invariant all other points. This $\phi$
preserves pairs of points with distance $\diam\Gamma$ in both directions. But,
due to $\diam\Gamma\geq 3$, the bijection $\phi$ cannot be an automorphism of
$\Gamma$. Clearly, the graph on $\cS_2(\cF_2)$ is of this kind.
\end{example}

\begin{example}\label{ex:Sym-Alt}
The space $\cS_2(\cF_2)$ from Example~\ref{ex:A5} is just a particular case of
the following, more general situation. Let $\cF$ be any field of characteristic
$2$, and let $n\geq 2$ be an even integer. By \cite[Proposition~5.5]{Wan1996},
the diameter of the graph on the space $\cS_n(\cF)$ equals $n+1\ge 3$.
Moreover, two matrices $A,B\in\cS_n(\cF)$ satisfy $d(A,B)=n+1$ if, and only if,
$A-B$ is an alternate matrix with rank $n$. Consequently, $d(A,B)= n+1$ implies
that either \emph{both $A$ and $B$ are alternate\/} or \emph{both $A$ and $B$
are non-alternate}. Now it is easy to establish the existence of a bijection
$\phi:\cS_n(\cF)\to\cS_n(\cF)$ which preserves pairs of matrices at distance
$n+1$ in both directions without being an isomorphism. Choose any alternate
matrix $K\in\cS_n(\cF)$ with $K\neq 0$. Given $X\in\cS_n(\cF)$ we define
\begin{equation*}
    X^\phi:=X+K   \mbox{~~if~~} X \mbox{~~is alternate, and~~}
    X^\phi:=X     \mbox{~~otherwise}.
\end{equation*}
As the restriction of $\phi$ to the set of alternate matrices is a
transformation as in \eqref{eq:herm-trafo}, $\phi$ preserves matrix pairs with
distance $n+1$. We have $d(E_{11},0)=1$ and
\begin{equation*}
          d(0,E_{11}) + d(E_{11},K) \ge d(0,K) = \rank(K)+1\ge 3.
\end{equation*}
Hence $d(E_{11}^\phi,0^\phi)=d(E_{11},K)\ge 2$.
\end{example}

\subsection{Projective geometry of rectangular
matrices---\newline the Grassmann space}

Let $\cD$ be a division ring. The projective space of rectangular matrices
$\MM$, $m,n\ge 2$, is the Grassmann space $G(m,m+n; \cD)$ over $\cD$; its
\emph{points\/} are the $m$-dimensional subspaces of the $(m+n)$-dimensional
left vector space over $\cD$. We refer to \cite[Section~3.6]{Wan1996a} for its
relationship with $\MM$. Two points $W_1$,$W_2\in G(m,m+n; \cD)$ are called
\emph{adjacent\/} if $W_1\cap W_2$ is $(m-1)$-dimensional. As before, we
consider $G(m,m+n; \cD)$ as a graph based on the adjacency relation. The
distance between two points $W_1$ and $W_2$ is
\begin{equation*}
    d(W_1,W_2)=m-\dim(W_1\cap W_2).
\end{equation*}
The graph on the Grassmann space $G(m,m+n; \cD)$ has diameter $\min\{m,n\}$.

Using dimension arguments, conditions (A1), (A2), (A3), and (A5) can be proved
easily. We sketch the proof of (A4) for the case $m\leq n$. Given
$m$-dimensional subspaces $X,Y,Z$ with $X\neq Y$ and $d(X,Z)=d(Y,Z)=m$ there
exists a vector $a\in X\setminus Y$. Choose an $(m-1)$-dimensional subspace
$S\subset Z$ such that $S\cap\big({\spn(a,Y)}\cap Z\big)=\{0\}$. Then
$W:=\spn(a,S)$ has the required properties.

Due to the presence of \emph{points at infinity\/} there is no need to exclude
the field with two elements in the following theorem.

\begin{theorem}\label{thm:grassmann}
Let $\cD,\cD'$ be division rings. Let $m,n,p,q$ be integers $\ge 2$. If $\phi:
G(m,m+n; \cD)\to G(p,p+q;\cD' )$ is a surjection which satisfies
\begin{eqnarray*}
    &d(A,B)=\min\{m,n\} \;\;\Leftrightarrow\;\;
     d(A^{\phi},B^{\phi})=\min\{p,q\}& \\
       &\makebox[0.9\textwidth]{\hfill for all~~$A,B\in G(m,m+n; \cD)$},&
\end{eqnarray*}
then $\phi$ is bijective. Both $\phi$ and $\phi^{-1}$ preserve adjacency of
subspaces. Moreover, $\min\{m,n\}=\min\{p,q\}$.
\end{theorem}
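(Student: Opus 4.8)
The plan is to apply Theorem~\ref{thm:hauptsatz} to the graph on the Grassmann space, exactly as Theorem~\ref{thm:rechteck} and Theorem~\ref{thm:Hermite} were obtained from it. For this it suffices to check that the graph on $G(m,m+n;\cD)$ satisfies conditions (A1)--(A5); the outline of this verification is already sketched in the text preceding the statement, so the task reduces to filling in that sketch. Once (A1)--(A5) are in place, Theorem~\ref{thm:hauptsatz} says that the hypothesised surjection $\phi$, which preserves pairs of points at the diameter distance in both directions, is an isomorphism of graphs; in particular it is a bijection, $\phi$ and $\phi^{-1}$ both preserve adjacency (distance $1$), and $\diam G(m,m+n;\cD)=\diam G(p,p+q;\cD')$, i.e.\ $\min\{m,n\}=\min\{p,q\}$. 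This is precisely the asserted conclusion.

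So the substance lies in the following lemma-style claim, which I would state and prove first: the graph on $G(m,m+n;\cD)$ satisfies (A1)--(A5). Here the distance formula $d(W_1,W_2)=m-\dim(W_1\cap W_2)$ and the value $\diam G(m,m+n;\cD)=\min\{m,n\}$ are given. As in the matrix cases, the group of collineations of $PG(m+n-1,\cD)$ (together with, when $m=n$, a correlation, though it is not even needed here) acts transitively on pairs of points at a fixed distance, so when exhibiting a pair at distance $k$ one may fix a convenient model. I would treat the case $m\le n$ and remark that $m\ge n$ is dual (pass to the space of $n$-dimensional subspaces, equivalently the dual Grassmannian). For (A1), connectedness and finiteness of the diameter follow from the distance formula. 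For (A2), given $X,Y$ with $d(X,Y)=j$, I extend a chain: pick an $m$-subspace $Z$ containing $X\cap Y$ suitably so that $d(X,Z)=\min\{m,n\}$ and $d(X,Z)=d(X,Y)+d(Y,Z)$; concretely one enlarges the "new part" of $Y$ relative to $X$ to a maximal-size transverse configuration, using $\dim(X+Y)\le m+n$. For (A3), two $m$-subspaces $x,y$ at distance $2$ with a common neighbour $z$: since $\dim(x\cap y)=m-2$ and $z$ meets each in dimension $m-1$, there is room (as $m\ge 2$ and the ambient space has dimension $m+n\ge m+2$) to choose a second common neighbour $w$ of $x,y$ with $\dim(z\cap w)=m-2$; this is a direct dimension count inside $x+y$ plus one extra direction. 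For (A4) I would use precisely the argument already sketched: given $X\ne Y$ with $d(X,Z)=d(Y,Z)=m$ (so $X$, $Y$, $Z$ are pairwise in "general position"), pick $a\in X\setminus Y$, choose an $(m-1)$-subspace $S\subset Z$ with $S\cap\bigl(\spn(a,Y)\cap Z\bigr)=\{0\}$ — possible because $\spn(a,Y)\cap Z$ is at most $1$-dimensional while $Z$ is $m$-dimensional with $m\ge 2$ — and set $W:=\spn(a,S)$; then $\dim(W\cap Z)=m-1$ gives $d(Z,W)=1$, $a\in W\cap X$ gives $d(X,W)\le m-1$ while $S$ being transverse to $X$'s complement forces equality, and the choice of $S$ forces $W\cap Y=\{0\}$, i.e.\ $d(Y,W)=m$. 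For (A5), given adjacent $a,b$ with $\dim(a\cap b)=m-1$, the "line" through $a$ and $b$ (the pencil of $m$-subspaces between $a\cap b$ and $a+b$) has more than two members — here the presence of points at infinity means no restriction on $|\cD|$ is needed, since the pencil is a projective line over $\cD$ and hence has at least three points even when $|\cD|=2$ — so choose $p$ a third member of the pencil; for any $X$ with $d(X,p)=\min\{m,n\}$, a short argument using that $a,b,p$ all contain $a\cap b$ and lie in $a+b$ shows $X$ cannot simultaneously be at distance $<\min\{m,n\}$ from both $a$ and $b$.

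The main obstacle is (A4): it is the one place where the three geometries in the paper each need a separate argument, and one must get the three dimension bounds to line up simultaneously (the new vector $a$ landing in $W\cap X$, the chosen $(m-1)$-subspace $S$ staying transverse to $Y$, and $W$ meeting $Z$ in codimension exactly $1$). Everything else is a routine dimension count, but (A4) requires care that the subspace $\spn(a,Y)\cap Z$ is small enough (at most $1$-dimensional) to be avoided by an $(m-1)$-subspace of the $m$-dimensional $Z$, which is exactly where $m\ge 2$ enters. Given Lemma-on-(A1)--(A5), the passage to Theorem~\ref{thm:grassmann} via Theorem~\ref{thm:hauptsatz} is immediate, as in the two preceding subsections.
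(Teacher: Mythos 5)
Your proposal is correct and follows the paper's own route exactly: verify (A1)--(A5) for the Grassmann graph (with (A4) handled by the very construction $W=\spn(a,S)$, $S\subset Z$ an $(m-1)$-dimensional subspace avoiding $\spn(a,Y)\cap Z$, that the paper sketches) and then invoke Theorem~\ref{thm:hauptsatz}. The paper gives only this sketch plus the remark that the pencil through two adjacent points always has a third member, so your filled-in dimension counts are a faithful elaboration rather than a different proof.
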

The fundamental theorem of the projective geometry of rectangular matrices
\cite[Theorem~3.52]{Wan1996a} can be used to explicitly describe a mapping
$\phi$ as in the theorem. As a further consequence, the existence of $\phi$
implies that $\cD$ and $\cD'$ are isomorphic or anti-isomorphic division rings,
and that $\{m,n\}=\{p,q\}$.

\bibliographystyle{plain}
\bibliography{mrabbrev,schaum}

Wen-ling Huang, Institut f\"{u}r Diskrete Mathematik und Geometrie, Tech\-ni\-sche
Universit\"{a}t Wien, Wiedner Hauptstra{\ss}e 8--10, A-1040 Wien, Austria.\\
Department Mathematik, Universit\"{a}t Hamburg, Bundesstra{\ss}e 55, D-20146 Hamburg,
Germany.\\
\texttt{huang@math.uni-hamburg.de}\\

Hans Havlicek, Institut f\"{u}r Diskrete Mathematik und Geometrie, Tech\-ni\-sche
Universit\"{a}t Wien, Wiedner Hauptstra{\ss}e 8--10, A-1040 Wien, Austria.\\
\texttt{havlicek@geometrie.tuwien.ac.at}

\end{document}